\newtheorem{theorem}{Theorem}[section]
\newtheorem{proposition}[theorem]{Proposition}
\newtheorem{corollary}[theorem]{Corollary}
\numberwithin{equation}{section}
\begin{document}
\baselineskip=15.5pt

\title[Hermitian symmetric space, flat bundle and holomorphicity 
criterion]{Hermitian symmetric space, flat bundle and holomorphicity criterion}

\author[H. Azad]{Hassan Azad}

\address{Department of Mathematics and Statistics, King Fahd University,
Saudi Arabia}

\email{hassanaz@kfupm.edu.sa}

\author[I. Biswas]{Indranil Biswas}

\address{School of Mathematics, Tata Institute of Fundamental
Research, Homi Bhabha Road, Bombay 400005, India}

\email{indranil@math.tifr.res.in}

\author[C. S. Rajan]{C. S. Rajan}

\address{School of Mathematics, Tata Institute of Fundamental
Research, Homi Bhabha Road, Bombay 400005, India}

\email{rajan@math.tifr.res.in}

\author[S. Sikander]{Shehryar Sikander}

\address{International Centre for Theoretical Physics, Strada Costiera 11, 34151 
Trieste, Italy}

\email{shehryar.sikander1@gmail.com}

\subjclass[2010]{32M15, 32L05, 53C55}

\keywords{Hermitian symmetric space, flat bundle, Hermitian structure, holomorphicity}

\date{}

\begin{abstract}
Let $K\backslash G$ be an irreducible Hermitian symmetric space of noncompact type 
and $\Gamma \,\subset\, G$ a closed torsionfree discrete subgroup. Let $X$ be a compact 
K\"ahler manifold and $\rho\, :\, \pi_1(X, x_0)\,\longrightarrow\, \Gamma$ a 
homomorphism such that the adjoint action of $\rho(\pi_1(X, x_0))$ on 
$\text{Lie}(G)$ is completely reducible. A theorem of Corlette associates to $\rho$ 
a harmonic map $X\, \longrightarrow\, K\backslash G/\Gamma$. We give a criterion for 
this harmonic map to be holomorphic. We also give a criterion for it to be 
anti--holomorphic.
\end{abstract}

\maketitle

\section{Introduction}

Let $G$ be a noncompact simple Lie group of adjoint type and $K\, \subset\, G$ a 
maximal compact subgroup, such that $K\backslash G$ is an irreducible Hermitian 
symmetric space of noncompact type. The Lie algebra of $G$ will be denoted by
$\mathfrak g$. Let $\Gamma$ be a closed torsionfree discrete subgroup
of $G$. Take a compact connected K\"ahler manifold $X$;
fix a base point $x_0\, \in\, X$. Let
$$
\rho\, :\, \pi_1(X, x_0)\,\longrightarrow\, \Gamma
$$
be a homomorphism such that the adjoint action of $\rho(\pi_1(X, x_0))$ on
$\mathfrak g$ is completely reducible. This $\rho$ produces a $C^\infty$
principal $G$--bundle $E_G\, \longrightarrow\, X$ equipped with a flat connection
$D$. A reduction of structure group of $E_G$ to $K$ is given by a map
$X\,\longrightarrow\, K\backslash G/\Gamma$. Note that $K\backslash G/\Gamma$ is
a K\"ahler manifold; it need not be compact.

A theorem of Corlette says that there is a $C^\infty$ reduction of structure group 
of $E_G$ to $K$ such that corresponding map $H_D\, :\, X\,\longrightarrow\, 
K\backslash G/\Gamma$ is harmonic \cite{Co}. Our aim here is to address the 
following:
\begin{itemize}
\item When is $H_D$ holomorphic?

\item When is $H_D$ anti--holomorphic?
\end{itemize}

Let $H\, :\, X\,\longrightarrow\, K\backslash G/\Gamma$ be a $C^\infty$ map giving a
reduction of structure group of $E_G$ to $K$. We give a criterion under which $H$
is holomorphic or anti--holomorphic (see Theorem \ref{thm1}). Since a
holomorphic or anti--holomorphic map between K\"ahler manifolds is harmonic,
Theorem \ref{thm1} gives criterion for $H_D$ to be holomorphic or anti--holomorphic
(see Corollary \ref{cor3}).

\section{Harmonic map to $G/K$}\label{sec2}

A Lie group is called simple if its Lie algebra is so \cite[page 131]{He}. Let $G$ 
be a connected real noncompact simple Lie group whose center is 
trivial. It is known that if $K_1$ and $K_2$ are two maximal compact subgroups of 
$G$, then there is an element $g\, \in\, G$ such that $K_1\,=\, g^{-1}K_2g$ 
\cite[page 256, Theorem 2.2(ii)]{He}. In particular, any two maximal compact 
subgroups of $G$ are isomorphic. Assume that $G$ satisfies the condition that the 
dimension of the center of a maximal compact subgroup $K$ of it is positive. This in 
fact implies that the center of $K$ is isomorphic to $S^1\,=\, {\mathbb R}/\mathbb 
Z$ \cite[page 382, Proposition 6.2]{He}.

Fix a maximal compact subgroup $K\, \subset\, G$. Consider the left--translation 
action of $K$ on $G$. The above conditions on $G$ imply that the corresponding 
quotient $K\backslash G$ is an irreducible Hermitian symmetric space of noncompact 
type. Conversely, given any irreducible Hermitian symmetric space of noncompact 
type, there is a group $G$ of the above type such that $K\backslash G$ is 
isometrically isomorphic to it \cite[page 381, Theorem 6.1(i)]{He}. In fact, this 
gives a bijection between the isomorphism classes of Lie groups of the above type 
and the holomorphic isometry classes of irreducible Hermitian symmetric spaces 
\cite[page 381, Theorem 6.1(i)]{He}. The Lie algebra of $G$ will be denoted by 
$\mathfrak g$.

Fix a closed torsionfree discrete subgroup
$\Gamma\, \subset\, G$. Therefore, the two-sided quotient
\begin{equation}\label{eq1}
M_\Gamma\, :=\, K\backslash G/\Gamma
\end{equation}
is a connected K\"ahler manifold.

Let $X$ be a compact connected orientable real manifold. Fix a base point $x_0\, 
\in\, X$. Let
\begin{equation}\label{eq3}
\beta\, :\, \widetilde{X}\,\longrightarrow\, X
\end{equation}
be the universal cover of $X$ associated to $x_0$. The right--action of the
fundamental group $\pi_1(X, x_0)$ on $\widetilde{X}$ will be denoted by
``$\cdot$''. Let
\begin{equation}\label{eq2}
\rho\, :\, \pi_1(X, x_0)\,\longrightarrow\, \Gamma
\end{equation}
be a homomorphism such that the adjoint action of $\rho(\pi_1(X, x_0))\, \subset\, G$
on the Lie algebra $\mathfrak g$ is completely reducible. Associated to $\rho$
we have a principal $G$--bundle $E_G\,\longrightarrow\, X$ equipped with a flat
connection $D$. We note that $E_G$ is the quotient of $\widetilde{X}\times G$ (see
\eqref{eq3}) where two points $(x_1\, ,g_1)\, , (x_2\, ,g_2)\, \in\, \widetilde{X}\times G$
are identified if there is an element $z\, \in\, \pi_1(X, x_0)$ such that
$x_2\,=\, x_1\cdot z$ and $g_2\,=\, \rho(z)^{-1}g_1$. The natural projection
$\widetilde{X}\times G\,\longrightarrow\, \widetilde{X}$ produces the projection
$E_G\,\longrightarrow\, X$ from the quotient space, and the right--translation action
of $G$ on $\widetilde{X}\times G$ produces the action of $G$ on $E_G$. The trivial
connection on the trivial principal $G$--bundle $\widetilde{X}\times G\,\longrightarrow\,
\widetilde{X}$ descends to the connection $D$ on the principal $G$--bundle $E_G$.

The above construction gives the following:

\begin{corollary}\label{cor1}
The pulled back principal $G$--bundle $\beta^*E_G$ is identified
with the trivial principal $G$--bundle $\widetilde{X}\times G$ by sending any
$(x\, ,g)\, \in\, \widetilde{X}\times G$ to $(x\, ,z)$, where $z$ is the equivalence
class of $(x\, ,g)$. This identification takes the connection $\beta^*D$ on
$\beta^*E_G$ to the trivial connection on the trivial principal
$G$--bundle $\widetilde{X}\times G$.
\end{corollary}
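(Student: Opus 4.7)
The plan is to write down the claimed map explicitly, verify that it is a $G$--equivariant diffeomorphism of principal $G$--bundles over $\widetilde{X}$, and then trace through the two descriptions of $D$ to confirm that the trivial connection corresponds to $\beta^*D$.

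First, I would define $\phi\,:\,\widetilde{X}\times G\,\longrightarrow\, \beta^*E_G$ by $(x\, ,g)\,\longmapsto\, (x\, ,[(x\, ,g)])$, where $[(x\, ,g)]$ denotes the equivalence class of $(x\, ,g)$ in $E_G$. Since the projection $E_G\to X$ sends $[(x\, ,g)]$ to $\beta(x)$, this pair indeed lies in $\beta^*E_G$. Equivariance with respect to the right translation action of $G$ is immediate, because the $\pi_1(X,x_0)$--action on $\widetilde{X}\times G$ used to define $E_G$ acts by left multiplication on the $G$--factor by $\rho(z)^{-1}$, which commutes with right translation by $G$. For bijectivity I would argue fiberwise: given $x\in \widetilde{X}$, the fiber of $E_G$ over $\beta(x)$ consists of the classes $[(x'\, ,g')]$ with $\beta(x')=\beta(x)$, i.e., with $x'$ in the $\pi_1(X,x_0)$--orbit of $x$, and each such class contains a unique representative with first coordinate equal to $x$; thus $g\mapsto [(x\, ,g)]$ is a bijection $G\to(E_G)_{\beta(x)}$. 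A smooth $G$--equivariant bijection of principal $G$--bundles over the same base is automatically an isomorphism.

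For the connection statement, the key identity is $q\,=\, p\circ\phi$, where $q\,:\,\widetilde{X}\times G\,\to\, E_G$ is the quotient map and $p\,:\,\beta^*E_G\,\to\, E_G$ is the canonical projection covering $\beta$. By construction, $D$ is characterized as the descent of the trivial connection along $q$, so $dq$ carries the horizontal distribution $T\widetilde{X}\oplus\{0\}$ onto the horizontal distribution of $D$. On the other hand, the pullback connection $\beta^*D$ is characterized by the property that its horizontal distribution is the unique $G$--invariant horizontal distribution in $T(\beta^*E_G)$ that $dp$ maps onto that of $D$. The image $d\phi(T\widetilde{X}\oplus\{0\})$ is $G$--invariant and horizontal, and is mapped by $dp$ onto the horizontal distribution of $D$ by the identity $dq=dp\circ d\phi$, so it must coincide with the horizontal distribution of $\beta^*D$.

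The only real obstacle here is combinatorial: keeping the two equivalence relations (the one on $\widetilde{X}\times G$ defining $E_G$ and the one implicit in the pullback $\beta^*E_G$) and the two right $G$--actions straight. No analytic input is required; the corollary is purely a matter of unwinding definitions.
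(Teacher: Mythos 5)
Your proposal is correct and takes essentially the same route as the paper, which offers no separate argument at all: it states Corollary \ref{cor1} as an immediate consequence of the construction of $E_G$ as a quotient of $\widetilde{X}\times G$. You have merely made explicit the checks (equivariance, fiberwise bijectivity via the unique representative with prescribed first coordinate, and the identification of horizontal distributions through $dq=dp\circ d\phi$) that the authors leave to the reader.
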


A Hermitian structure on $E_G$ is a $C^\infty$ reduction of structure group
$$
E_K\, \subset\, E_G
$$
to the maximal compact subgroup $K\, \subset\, G$.
For such a reduction of structure group $E_K$, we have the reduction of
structure group
$$
\beta^*E_K\, \subset\, \beta^*E_G \,=\, \widetilde{X}\times G\,\longrightarrow\,
\widetilde{X}\, ,
$$
where $\beta$ is the projection in \eqref{eq3}. Therefore, a Hermitian structure on $E_G$
is given by a $\pi_1(X, x_0)$--equivariant $C^\infty$ map
\begin{equation}\label{h}
h\, :\, \widetilde{X}\, \longrightarrow G/K
\end{equation}
with $\pi_1(X, x_0)$ acting on the left of $G/K$ via the homomorphism $\rho$.

Let $\iota\, :\, G/K\, \longrightarrow\, K\backslash G$ be the isomorphism
that sends a coset $gK$ to the coset $Kg^{-1}$. Let
$$
E_K\, \subset\, E_G
$$
be the reduction of structure group to $K$ corresponding to a map $h$ as in \eqref{h}.
Since $h$ is $\pi_1(X, x_0)$--equivariant, the composition
$\iota\circ h$ descends to a map
\begin{equation}\label{h2}
H_D\, :\, X\, \longrightarrow\, M_\Gamma\,=\, K\backslash G/\Gamma
\end{equation}
(see \eqref{eq1}).

Fix a Riemannian metric $g_X$ on $X$. A theorem of Corlette says that there is a 
Hermitian structure $E_K\, \subset\, E_G$ such that the above map $h$ is harmonic 
with respect to $\beta^* g_X$ and the natural invariant metric on $G/K$ \cite[page 
368, Theorem 3.4]{Co} (in the special case where $\dim X\,=\, 2$ and $G\,=\,
\text{PSL}(2,{\mathbb R})$, this was proved in \cite{Do}).
Note that $h$ is harmonic if and only if $H_D$ in \eqref{h2} 
is harmonic with respect to $g_X$ and the natural metric on $M_\Gamma$. 
If $E'_K$ is another Hermitian structure such that corresponding map 
$\widetilde{X}\, \longrightarrow\, G/K$ is also harmonic, then there is an 
automorphism $\delta$ of the principal $G$--bundle $E_G$ such that
\begin{itemize}
\item $\delta$ preserves the flat connection $D$ on $E_G$, and

\item $\delta(E_K)\,=\, E'_K$.
\end{itemize}
In other words, if $h'\, :\, \widetilde{X}\, \longrightarrow\, G/K$ is
the $\pi_1(X, x_0)$--equivariant map corresponding to this $E'_K$, then
then there is an element $g\, \in\, G$ such that
\begin{itemize}
\item $g$ commutes with $\rho(\pi_1(X, x_0))$, and

\item $h'(y)\,=\, gh(y)$ for all $y\, \in\, \widetilde{X}$.
\end{itemize}
Note that the first condition that $g$ commutes with $\rho(\pi_1(X, x_0))$
implies that the map $y\, \longmapsto\, gh(y)$ intertwines the
actions of $\pi_1(X, x_0)$ on $\widetilde{X}$ and $G/K$.

\section{Constructions using flat connection and Hermitian structure}

Let $G$ and $K$ be as before. As before, the Lie algebra of $G$ will be denoted by
$\mathfrak g$. The Lie algebra of $K$ will be denoted by $\mathfrak k$. We have
the isotypical decomposition
\begin{equation}\label{f1}
{\mathfrak g}\,=\, {\mathfrak k} \oplus {\mathfrak p}
\end{equation}
for the adjoint action of the center of $K$ \cite[p. 208, Theorem 3.3(iii)]{He}. Note
that the adjoint action of $K$ on $\mathfrak g$ preserves this decomposition. Let
\begin{equation}\label{f2}
\phi\, :\, {\mathfrak g}\,\longrightarrow\, {\mathfrak k}\ \ \text{ and }\ \
\psi\, :\, {\mathfrak g}\,\longrightarrow\, {\mathfrak p}
\end{equation}
be the projections associated to the decomposition in \eqref{f1}.

Let $Y$ be a connected complex manifold, $F_G$ a $C^\infty$ principal 
$G$--bundle on $Y$ and $D^Y$ a flat connection on the principal $G$--bundle $F_G$. 
Fix a $C^\infty$ reduction of structure group of $F_G$
$$
F_K\, \subset\, F_G
$$
to the subgroup $K$. Let
\begin{equation}\label{f3}
\text{ad}(F_K)\,=\, F_K({\mathfrak k})\, :=\,
F_K\times^K{\mathfrak k}\, \longrightarrow\, Y\ \ \text{ and }
F_K({\mathfrak p})\, :=\,
F_K\times^K{\mathfrak p}\, \longrightarrow\, Y
\end{equation}
be the vector bundles associated to this principal $K$--bundle $F_K$ for the 
$K$--modules $\mathfrak k$ and $\mathfrak p$ respectively.
Let $\text{ad}(F_G)\,=\, F_G\times^G{\mathfrak g}\,\longrightarrow\, Y$ be the
adjoint vector bundle for $F_G$. From \eqref{f1} it follows that
\begin{equation}\label{e-11}
\text{ad}(F_G)\,=\, \text{ad}(F_K)\oplus F_K({\mathfrak p})\, .
\end{equation}

The connection $D^Y$ on $F_G$ is defined by a $\mathfrak g$--valued $1$--form on the 
total space of $F_G$; this form on $F_G$ will also be denoted by $D^Y$. The 
restriction of this form $D^Y$ to $F_K$ will be denoted by $D'$. Consider the 
$\mathfrak k$--valued $1$--form $\phi\circ D'$ on $F_K$, where $\phi$ is the 
projection in \eqref{f2}. It is $K$--equivariant and restricts to the Maurer--Cartan 
form on the fibers of the principal $K$--bundle $F_K$. Hence $\phi\circ D'$ is a 
connection on $F_K$; we will denote by $D_K$ this connection on $F_K$. Consider the 
$\mathfrak p$--valued $1$--form $\psi\circ D'$ on $F_K$, where $\psi$ is the 
projection in \eqref{f2}. It is $K$--equivariant and its restriction to any fiber of 
the principal $K$--bundle $F_K$ vanishes identically. Therefore, $\psi\circ D'$ is a 
$C^\infty$ section $F_K({\mathfrak p})\otimes T^*Y$, where $F_K({\mathfrak p})$ is 
the vector bundle defined in \eqref{f3}, and $T^*Y$ is the real cotangent bundle of 
$Y$. Let
\begin{equation}\label{h4}
D^{Y,\mathfrak p}\, \in\, C^\infty(Y,\, F_K({\mathfrak p})\otimes T^*Y)
\end{equation}
be this $F_K({\mathfrak p})$--valued $1$--form on $Y$.

Using the complex structure on $Y$ we may decompose any $F_K({\mathfrak p})$--valued
$1$--form on $Y$ into a sum of $F_K({\mathfrak p})$--valued forms of types $(1\, ,0)$
and $(0\, ,1)$. Let
\begin{equation}\label{f4}
D^{Y, \mathfrak p}\,=\, D^{1,0}+D^{0,1}
\end{equation}
be the decomposition of the above $F_K({\mathfrak p})$--valued $1$--form
$D^{Y,\mathfrak p}$ into $(1\, ,0)$ and $(0\, ,1)$ parts.

Define
$$
\omega_{D^Y}\, :=\, \frac{1}{\sqrt{-1}}(D^{1,0}- D^{0,1})\, \in\, C^\infty(Y,\, 
(\text{ad}(F_G)\otimes
T^*Y)\otimes_{\mathbb R}{\mathbb C})\, ,
$$
where $D^{1,0}$ and $D^{0,1}$ are constructed in \eqref{f4}. From \eqref{f4} it
follows that
\begin{equation}\label{f5}
\omega_{D^Y}\,\in\,C^\infty(Y,\, F_K({\mathfrak p})\otimes T^*Y)\,\subset\,
C^\infty(Y,\, (\text{ad}(F_G)\otimes T^*Y)\otimes_{\mathbb R}{\mathbb C})
\end{equation}
(see \eqref{e-11}).
In other words, $\omega_{D^Y}$ is a real $1$--form on $Y$ with values in the real vector
bundle $F_K({\mathfrak p})\,\subset\,\text{ad}(F_G)\otimes_{\mathbb R}{\mathbb C}$.

\section{Criterion for holomorphicity}\label{se4}

We now work in the set-up of Section \ref{sec2}. Assume that the manifold $X$ is 
equipped with a complex structure. Identify two elements $(g_1\, , g'_1)$ and 
$(g_2\, , g'_2)$ of $(K\backslash G)\times G$ if there is an element $\gamma\, \in\, 
\Gamma$ such that $g_2\,=\, g_1\gamma^{-1}$ and $g'_2\,=\, \gamma g'_1$. The 
corresponding quotient will be denoted by ${\mathcal E}_G$. The group $G$ acts on 
${\mathcal E}_G$; the action of any $g\, \in\, G$ sends the equivalence class of 
$(g_1\, , g'_1)\, \in\, (K\backslash G)\times G$ to the equivalence class of $(g_1\, 
, g'_1g)$. Consider the map $${\mathcal E}_G\, \longrightarrow\, M_\Gamma\,=\, 
K\backslash G/\Gamma$$ that sends the equivalence class of any $(g\, , g')\, \in\, 
(K\backslash G)\times G$ to the equivalence class of $g$. It, and the above action 
of $G$ on ${\mathcal E}_G$, together make ${\mathcal E}_G$ a principal $G$--bundle 
on $M_\Gamma$. Pull back the left invariant Maurer--Cartan form on $G$ using the 
projection to the second factor $(K\backslash G)\times G\, \longrightarrow\, G$. 
This pulled back form descends to the quotient space ${\mathcal E}_G$. It is 
straight-forward to check that this descended form defines a connection on the 
principal $G$--bundle ${\mathcal E}_G$. This connection on ${\mathcal E}_G$ will be 
denoted by ${\mathcal D}^0$. The connection ${\mathcal D}^0$ is flat.

Consider the submanifold $\{(g^{-1}\, ,g)\,\mid\, g\, \in\, G\}\,\subset\, G\times
G$. Let
\begin{equation}\label{M}
{\mathcal N}\,\subset\, (K\backslash G)\times G
\end{equation}
be the image of it under the obvious quotient map. Let
\begin{equation}\label{ek}
{\mathcal E}_K\, \subset\, {\mathcal E}_G
\end{equation}
be the image of $\mathcal N$ in
the quotient space ${\mathcal E}_G$ (recall that ${\mathcal E}_G$ is a quotient of
$(K\backslash G)\times G$).
It is straight-forward to check that the action of the subgroup $K\, \subset\, G$
on ${\mathcal E}_G$ preserves ${\mathcal E}_K$. More precisely, ${\mathcal E}_K$
is a reduction of structure group of the principal $G$--bundle ${\mathcal E}_G
\,\longrightarrow\, M_\Gamma$ to the subgroup $K\,\subset\, G$.

Take $\rho$ as in \eqref{eq2}. As in Section \ref{sec2}, let $(E_G\, ,D)$ denote 
the associated flat principal $G$--bundle over the compact complex manifold $X$. 
Take any $C^\infty$ reduction of structure group
$$
E_K\, \subset\, E_G
$$
to $K\, \subset\, G$.

\begin{proposition}\label{prop1}
The pulled back principal $G$--bundle $H^*_D {\mathcal E}_G$, where $H_D$ is 
constructed in \eqref{h2}, is canonically identified with the principal $G$--bundle
$E_G$. This identification between $E_G$ and $H^*_D {\mathcal E}_G$ takes
\begin{enumerate}
\item the pulled back connection $H^*_D {\mathcal D}^0$ to
the connection $D$ on $E_G$, and

\item the reduction $H^*_D {\mathcal E}_K\, \subset\, H^*_D{\mathcal E}_G$ (see
\eqref{ek}) to the reduction $E_K\, \subset\, E_G$.
\end{enumerate}
\end{proposition}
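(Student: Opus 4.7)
The plan is to build the identification at the level of the universal cover, using the trivialization of $\beta^*E_G$ from Corollary \ref{cor1}, and then verify that it descends and preserves all the required structures.

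First I would define a map $\Phi\colon \widetilde{X}\times G\,\longrightarrow\,\mathcal{E}_G$ by sending $(x,g)$ to the equivalence class of $(\iota(h(x)), g)$, where $h\colon \widetilde{X}\to G/K$ is the $\pi_1(X,x_0)$-equivariant map in \eqref{h} corresponding to the reduction $E_K$. The central verification is that $\Phi$ is $\pi_1(X,x_0)$-equivariant with respect to the action $(x,g)\,\longmapsto\,(x\cdot z,\rho(z)^{-1}g)$ on the source: using the twisted equivariance $\iota(g'\cdot gK)\,=\,(Kg^{-1})\cdot (g')^{-1}$ of $\iota$, together with $h(x\cdot z)\,=\,\rho(z)^{-1}h(x)$, one checks that the images of $(x,g)$ and $(x\cdot z,\rho(z)^{-1}g)$ coincide in $\mathcal{E}_G$ after applying the defining $\Gamma$-relation with $\gamma\,=\,\rho(z)\,\in\,\Gamma$. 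Since $\Phi$ is also equivariant for the $G$-action (which is by right multiplication on the second factor on both sides), it factors through a $G$-equivariant map $\widehat{\Phi}\colon E_G\,\longrightarrow\,\mathcal{E}_G$. By the construction of $H_D$ from $\iota\circ h$ in \eqref{h2}, composing $\widehat{\Phi}$ with the projection $\mathcal{E}_G\,\longrightarrow\,M_\Gamma$ recovers $H_D$, so $\widehat{\Phi}$ factors through a bundle map $E_G\,\longrightarrow\,H_D^*\mathcal{E}_G$ over $X$. This map is an isomorphism because any $G$-equivariant morphism between principal $G$-bundles over the same base is automatically an isomorphism.

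For part (1), I would observe that the connection $\mathcal{D}^0$ is defined as the descent of the pullback of the Maurer--Cartan form on $G$ via the second projection $(K\backslash G)\times G\,\longrightarrow\,G$. Pulling it back through $\Phi$ before passing to the quotient gives the Maurer--Cartan form on the second factor of $\widetilde{X}\times G$, which is precisely the connection $1$-form of the trivial connection on the trivial $G$-bundle $\widetilde{X}\times G\,\longrightarrow\,\widetilde{X}$. By Corollary \ref{cor1} this trivial connection is identified with $\beta^*D$, so $\widehat{\Phi}^*\mathcal{D}^0\,=\,D$ on $E_G$. For part (2), I would compute $\Phi^{-1}(\mathcal{E}_K)$ explicitly: a class $[(\iota(h(x)),g)]$ lies in $\mathcal{E}_K$ if and only if some $\Gamma$-translate of the representative lies in $\mathcal{N}$ (see \eqref{M}), which after a short manipulation reduces to the condition $\iota(h(x))\,=\,Kg^{-1}$ in $K\backslash G$, equivalently $gK\,=\,h(x)$ in $G/K$. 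But this is exactly the condition cutting out the reduction $\beta^*E_K\,\subset\,\widetilde{X}\times G\,=\,\beta^*E_G$ coming from $h$; descending to $X$ gives $\widehat{\Phi}^{-1}(\mathcal{E}_K)\,=\,E_K$.

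The main obstacle is purely bookkeeping: one must juggle the left/right actions of $G$, $K$, $\Gamma$, and $\pi_1(X,x_0)$, together with the twist introduced by $\iota$ that interchanges $G/K$ and $K\backslash G$. Once the conventions are fixed, each of the three checks (equivariance of $\Phi$, identification of the connection, and identification of the reduction) becomes an immediate algebraic calculation in $G$.
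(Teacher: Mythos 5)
Your proof is correct and takes essentially the same route as the paper: the authors also work over the universal cover via Corollary \ref{cor1} and send a point with second coordinate $g$ to the class of $(Kg^{-1},g)\in\mathcal{N}$, the only cosmetic difference being that they define the map on the reduction $\beta^*E_K$ and then extend the structure group to $G$, whereas you define it on all of $\widetilde{X}\times G$ and afterwards verify that it carries $\beta^*E_K$ into $\mathcal{E}_K$. Your equivariance check with $\gamma=\rho(z)$ and the Maurer--Cartan identification of the connection match the paper's argument (the latter is left as ``straight-forward to check'' there).
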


\begin{proof}
Consider the pulled back reduction of structure group $\beta^*E_K\, \subset\, 
\beta^*E_G$, where $\beta$ is the universal cover in \eqref{eq3}. From Corollary 
\ref{cor1} we know that $\beta^*E_G\,=\, \widetilde{X}\times G$. Let
$$
f'\,:\, \beta^*E_K\, \longrightarrow\, G
$$
be the composition
$$
\beta^*E_K\, \hookrightarrow\, \beta^*E_G \,\stackrel{\sim}{\longrightarrow}\,
\widetilde{X}\times G\,\,\stackrel{{\rm pr}_2}{\longrightarrow}\, G\, ,
$$
where ${\rm pr}_2$ is the projection to the second factor. Now define the map
$$
f''\, :\, \beta^*E_K\, \longrightarrow\, (K\backslash G)\times G\, ,\ \  z\,
\longmapsto\, (\widetilde{f'(z)^{-1}}\, , f'(z))\, ,
$$
where $\widetilde{f'(z)^{-1}}\, \in\, K\backslash G$ is the image of $f'(z)^{-1}$
under the quotient map $G\, \longrightarrow\, K\backslash G$.
The Galois group $\pi_1(X, x_0)$ for $\beta$ has a natural right--action of the pullback
$\beta^*E_K$ that lifts the right--action of $\pi_1(X, x_0)$ on $\widetilde{X}$.
For any $z\, \in\, \beta^*E_K$ and $\gamma\,\in\, \pi_1(X, x_0)$, we have
$$
f''(z\cdot\gamma)\,=\, (\widetilde{f'(z)^{-1}}\rho(\gamma)
\, , \rho(\gamma)^{-1}f'(z))\, ,
$$
where $z\cdot\gamma\,\in\, \beta^*E_K$ is the image of $z$ under the action of
$\gamma$. This, and the fact that $\text{image}(f'')\, \subset\, {\mathcal N}$ (defined
in \eqref{M}), together imply that $f''$ descends to a map
$$
\widetilde{d}\, :\, E_K\, \longrightarrow\, \mathcal{E}_K
$$
(see \eqref{ek}). This map $\widetilde{d}$ is clearly $K$--equivariant, and
the following diagram is commutative
$$
\begin{matrix}
E_K & \stackrel{\widetilde{d}}{\longrightarrow} & \mathcal{E}_K\\
\Big\downarrow && \Big\downarrow\\
X & \stackrel{H_D}{\longrightarrow}& M
\end{matrix}
$$
Therefore, we get an isomorphism of principal $K$--bundles
\begin{equation}\label{f}
f\, :\, E_K\, \longrightarrow\, H^*_D \mathcal{E}_K\, .
\end{equation}
Since $E_G$ (respectively, ${\mathcal E}_G$) is the extension of structure group
of $E_K$ (respectively, ${\mathcal E}_K$) using the inclusion of $K$ in $G$, the
isomorphism $f$ in \eqref{f} produces an isomorphism of principal $G$--bundles
$$
\widehat{f}\, :\, E_G\, \longrightarrow\, H^*_D \mathcal{E}_G\, .
$$
It is straight--forward to check that $\widehat{f}$ takes the connection $D$ to
the connection $H^*_D\mathcal{D}^0$.
\end{proof}

Define ${\mathcal E}_K({\mathfrak p})\, :=\, {\mathcal E}_K\times^K{\mathfrak 
p}\,\longrightarrow\, M_\Gamma$ as in \eqref{f3}. Also, construct the 
associated vector bundle $E_K({\mathfrak p})$ on $X$ as done in \eqref{f3}.

\begin{corollary}\label{cor2}
The pulled back vector bundle $H^*_D {\mathcal E}_K({\mathfrak p})$, where $H_D$
is defined in \eqref{h2}, is canonically isomorphic to $E_K({\mathfrak p})$.
\end{corollary}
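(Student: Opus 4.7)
The plan is to derive this corollary directly from Proposition \ref{prop1} by applying the associated bundle construction, essentially as a functorial consequence. The key input is the canonical isomorphism of principal $K$--bundles $f\,:\,E_K\,\longrightarrow\, H^*_D\mathcal{E}_K$ produced in \eqref{f}, together with the compatibility between pullback of principal bundles along a smooth map and formation of associated vector bundles from a fixed $K$--module.

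First, I would recall that if $P\,\longrightarrow\, Y$ is a principal $K$--bundle, $V$ is any $K$--module, and $\varphi\,:\,Y'\,\longrightarrow\, Y$ is a smooth map, then there is a canonical isomorphism
$$
\varphi^*\bigl(P\times^K V\bigr)\,\cong\, (\varphi^*P)\times^K V
$$
of vector bundles on $Y'$; this is immediate from the fibered-product description of both sides. Applied with $P\,=\,\mathcal{E}_K$, $Y\,=\, M_\Gamma$, $Y'\,=\, X$, $\varphi\,=\, H_D$, and $V\,=\, \mathfrak{p}$, it gives
$$
H^*_D \mathcal{E}_K(\mathfrak{p})\,=\, H^*_D\bigl(\mathcal{E}_K\times^K \mathfrak{p}\bigr)
\,\cong\, (H^*_D \mathcal{E}_K)\times^K \mathfrak{p}.
$$

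Next, I would invoke the isomorphism $f\,:\, E_K\,\longrightarrow\, H^*_D\mathcal{E}_K$ of principal $K$--bundles from Proposition \ref{prop1}. Since extending the structure group is functorial with respect to morphisms of principal $K$--bundles, $f$ induces an isomorphism of associated vector bundles
$$
E_K(\mathfrak{p})\,=\, E_K\times^K \mathfrak{p}\,\stackrel{\sim}{\longrightarrow}\, (H^*_D\mathcal{E}_K)\times^K \mathfrak{p}.
$$
Composing with the previous canonical identification yields the desired canonical isomorphism $E_K(\mathfrak{p})\,\cong\, H^*_D\mathcal{E}_K(\mathfrak{p})$.

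There is no real obstacle here; the only thing worth emphasizing is the \emph{canonicity} of the identification. This follows because every step is canonical: the isomorphism $f$ is defined intrinsically from the data $(E_G,D,E_K)$ via the descent of $f''$ in the proof of Proposition \ref{prop1}, and the two isomorphisms used above are natural in the principal bundle. Hence the composite depends only on the given reduction $E_K\,\subset\, E_G$ and the map $H_D$, not on any auxiliary choices.
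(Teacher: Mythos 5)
Your proof is correct and follows exactly the paper's argument, which simply notes that the isomorphism $f$ of principal $K$--bundles from Proposition \ref{prop1} induces an isomorphism of the associated vector bundles; you have merely spelled out the intermediate canonical identification $H^*_D(\mathcal{E}_K\times^K\mathfrak{p})\cong(H^*_D\mathcal{E}_K)\times^K\mathfrak{p}$ that the paper leaves implicit.
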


\begin{proof}
The isomorphism $f$ in \eqref{f} between principal $K$--bundles produces an isomorphism
between the associated vector bundles.
\end{proof}

Let
\begin{equation}\label{hj}
\omega_{{\mathcal D}^0}\,\in\,C^\infty(M_\Gamma,\,
{\mathcal E}_K({\mathfrak p})\otimes_{\mathbb R}T^*M_\Gamma)
\end{equation}
be the real $1$--form constructed as in \eqref{f5} for the pair $({\mathcal D}^0\, ,
{\mathcal E}_K)$. Using the isomorphisms in Proposition \ref{prop1} and Corollary
\ref{cor2}, together with the homomorphism
$$
(dH_D)^*\, :\, H^*_DT^*M_\Gamma\, \longrightarrow\, T^*X\, ,
$$
the pulled back section $H^*_D\omega_{{\mathcal D}^0}$ produces a section
\begin{equation}\label{f6}
\widetilde{H^*_D\omega_{{\mathcal D}^0}}\,\in\,C^\infty(X,\,
E_K({\mathfrak p})\otimes_{\mathbb R}T^*X)\, .
\end{equation}
Let
\begin{equation}\label{f7}
\omega_D\,\in\,C^\infty(X,\, E_K({\mathfrak p})\otimes T^*X)
\end{equation}
be the real $1$--form constructed as in \eqref{f5} for the pair $(D\, , E_K)$.

\begin{theorem}\label{thm1}
The map $H_D$ is holomorphic if and only if $\widetilde{H^*_D\omega_{{\mathcal D}^0}}\,=\,
\omega_D$ (see \eqref{f6} and \eqref{f7}).

The map $H_D$ is anti--holomorphic if and only if $\widetilde{H^*_D\omega_{{\mathcal D}^0}}
\,=\, -\omega_D$.
\end{theorem}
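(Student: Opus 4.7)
The plan is to translate the equation $\widetilde{H^*_D\omega_{{\mathcal D}^0}}=\pm\omega_D$ into a pointwise identity relating $dH_D$ to the complex structures $J_X$ and $J_{M_\Gamma}$, and to check that this identity is exactly the (anti-)holomorphicity condition.

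First I would carry out the elementary computation with the type decomposition. For a real tangent vector $v\in T_yY$, using $v^{1,0}=\frac{1}{2}(v-\sqrt{-1}J_Y v)$ and $v^{0,1}=\frac{1}{2}(v+\sqrt{-1}J_Y v)$, a direct evaluation of $\omega_{D^Y}=\frac{1}{\sqrt{-1}}(D^{1,0}-D^{0,1})$ gives
\[
\omega_{D^Y}(v)\,=\,-D^{Y,\mathfrak p}(J_Y v).
\]
Applied to the two situations at hand this yields $\omega_D(v)=-D^{X,\mathfrak p}(J_X v)$ and $\omega_{{\mathcal D}^0}(w)=-D^{M_\Gamma,\mathfrak p}(J_{M_\Gamma}w)$. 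Since the passage from a triple $(F_G,D^Y,F_K)$ to $D^{Y,\mathfrak p}$ is manifestly functorial under pullback, Proposition \ref{prop1} and Corollary \ref{cor2} give, after the identifications therein,
\[
D^{X,\mathfrak p}(v)\,=\,D^{M_\Gamma,\mathfrak p}(dH_D(v)).
\]
Substituting, the equation $\widetilde{H^*_D\omega_{{\mathcal D}^0}}=\pm\omega_D$ becomes
\[
D^{M_\Gamma,\mathfrak p}\bigl(J_{M_\Gamma}\,dH_D(v)\mp dH_D(J_X v)\bigr)\,=\,0\qquad\text{for every }v\in TX.
\]

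It remains to show that $D^{M_\Gamma,\mathfrak p}$, viewed as a bundle map $TM_\Gamma\to {\mathcal E}_K(\mathfrak p)$, is fiberwise injective; this is the classical soldering isomorphism of the symmetric space. I would check it directly via the parameterization $g\mapsto(Kg^{-1},g)$ of ${\mathcal N}\subset(K\backslash G)\times G$ from \eqref{M}: in these coordinates the restriction of ${\mathcal D}^0$ to ${\mathcal E}_K$ pulls back to the left Maurer--Cartan form $v\mapsto(L_{g_0^{-1}})_* v$ on $G$, so its $\mathfrak p$-component at $g_0$ is $v\mapsto\psi((L_{g_0^{-1}})_* v)$, whose kernel in $T_{g_0}G$ is precisely the vertical $\mathfrak k$-direction for $G\to M_\Gamma$. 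Granting this, the displayed equation with sign $+$ is equivalent to $dH_D\circ J_X=J_{M_\Gamma}\circ dH_D$, i.e.\ to holomorphicity of $H_D$, while with sign $-$ it is equivalent to $dH_D\circ J_X=-J_{M_\Gamma}\circ dH_D$, i.e.\ to anti-holomorphicity. The main subtlety is the careful bookkeeping through Proposition \ref{prop1} and Corollary \ref{cor2} that justifies the pullback identity $D^{X,\mathfrak p}=\widetilde{H^*_D D^{M_\Gamma,\mathfrak p}}$; once this naturality and the fiberwise injectivity of the solder form are in hand, the theorem is pure linear algebra.
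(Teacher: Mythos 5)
Your proposal is correct and follows essentially the same route as the paper: both rest on the identification of $D^{X,\mathfrak p}$ with $dH_D$ and of ${\mathcal D}^{0,\mathfrak p}$ with the soldering isomorphism $TM_\Gamma\,\cong\,{\mathcal E}_K({\mathfrak p})$ (your fiberwise injectivity claim is exactly this), together with the elementary $(1,0)/(0,1)$ computation showing $\omega_{D^Y}(v)\,=\,-D^{Y,\mathfrak p}(J_Yv)$. The overall sign in that last identity differs from the paper's statement, but it appears on both sides of the equation being tested and so does not affect the equivalence.
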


\begin{proof}
The real tangent bundle $TM_\Gamma$ of $M_\Gamma\,=\, K\backslash G/\Gamma$ is 
identified with the associated vector bundle ${\mathcal E}_K({\mathfrak p})$, where 
${\mathcal E}_K$ is the principal $K$--bundle in \eqref{ek}. Also, we have
$H^*_D {\mathcal E}_K\,=\, E_K$ by Proposition \ref{prop1}.
Combining these we conclude that the vector bundle $E_K({\mathfrak p})\, 
\longrightarrow\, X$ in \eqref{f3} is identified with the pullback $H^*_D 
TM_\Gamma$. The $E_K({\mathfrak p})$--valued $1$--form
$D^{\mathfrak p}\, \in\, C^\infty(X,\, E_K({\mathfrak p})\otimes T^*X)$,
obtained by substituting $(X\, ,D\, , E_K)$ in place of $(Y\, ,D^Y\, ,F_K)$
in \eqref{h4}, coincides with the section given by the differential
$$
dH_D\, :\, TX\, \longrightarrow\, H^*_D TM_\Gamma\, .
$$
{}From this it follows that $\omega_D$ in \eqref{f7} is the section given by the
homomorphism
$$
TX\, \longrightarrow\, H^*_D TM_\Gamma\, ,\ \ v \, \longmapsto\,
dH_D (J_X(v))\, ,
$$
where $J_X\, :\, TX\, \longrightarrow\, TX$ is the almost complex structure on $X$.

Let
$$
{\mathcal D}^{0,\mathfrak p}\, \in\, C^\infty(M_\Gamma,\, {\mathcal E}_K({\mathfrak p})
\otimes T^*M_\Gamma)
$$
be the section constructed just as $D^{Y,\mathfrak p}$ is constructed in \eqref{h4}
after substituting $({\mathcal E}_G\, , {\mathcal D}^0\, , {\mathcal E}_K)$
in place of $(Y\, ,D^Y\, ,F_K)$. This ${\mathcal D}^{0,\mathfrak p}$ coincides with the
section given by the identity map of $TM_\Gamma$ (recall that $TM_\Gamma$ is identified
with ${\mathcal E}_K({\mathfrak p})$). Therefore, the section $\omega_{{\mathcal D}^0}$
in \eqref{hj} coincides with the section given by the almost complex structure
$$
J_{M_\Gamma}\, :\, TM_\Gamma\, \longrightarrow\, TM_\Gamma
$$
on $M_\Gamma$. Consequently, $\widetilde{H^*_D\omega_{{\mathcal D}^0}}\,=\,
\omega_D$ (respectively, $\widetilde{H^*_D\omega_{{\mathcal D}^0}}\,=\,
-\omega_D$) if and only if the differential $dH_D$ takes the almost complex
structure $J_X$ to $J_{M_\Gamma}$ (respectively, $-J_{M_\Gamma}$).
This completes the proof.
\end{proof}

Now assume that the complex manifold $X$ is K\"ahler. This means that $X$ is
equipped with a Hermitian structure $g_X$ such that
\begin{itemize}
\item the almost complex structure $J_X$ on $X$ is orthogonal with respect to $g_X$, and

\item the $(1\, ,1)$ on $X$ associate to the pair $(g_X\, , J_X)$ is closed.
\end{itemize}

If $A$ and $B$ are K\"ahler manifolds, then any holomorphic map $A\, \longrightarrow\,
B$ is harmonic \cite[page 1, \S~(1.2)(e)]{EL}, \cite{Li}. This implies that
any anti--holomorphic map $A\, \longrightarrow\, B$ is also harmonic; to see this
simply replace the almost complex structure $J_A$ of $A$ by the almost complex
structure $-J_A$. Therefore, Theorem \ref{thm1} has the following corollary:

\begin{corollary}\label{cor3}
Take the Hermitian structure on $E_G$ given by a map
$H_D\, :\, X\, \longrightarrow\, K\backslash G/\Gamma$. If
$\widetilde{H^*_D\omega_{{\mathcal D}^0}}\,=\, \omega_D$, then the map
$$
X\, \longrightarrow\, K\backslash G/\Gamma
$$
given by a harmonic metric in \cite{Co} is holomorphic. In that case
$H_D$ gives a harmonic metric. Conversely, if $H_D$ gives a harmonic metric
and $\widetilde{H^*_D\omega_{{\mathcal D}^0}}\,=\, \omega_D$, then
$H_D$ is holomorphic.

Take the Hermitian structure on $E_G$ given by a map 
$H_D\, :\, X\, \longrightarrow\, K\backslash G/\Gamma$.
If $\widetilde{H^*_D\omega_{{\mathcal D}^0}}\,=\, -\omega_D$, then the map
$X\, \longrightarrow\, K\backslash G/\Gamma$
given by a harmonic metric in \cite{Co} is anti--holomorphic. In that case
$H_D$ gives a harmonic metric. Conversely, if $H_D$ gives a harmonic metric
and $\widetilde{H^*_D\omega_{{\mathcal D}^0}}\,=\, -\omega_D$, then
$H_D$ is anti--holomorphic.
\end{corollary}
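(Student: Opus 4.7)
The strategy is to recast both $\omega_D$ and $\widetilde{H^*_D \omega_{\mathcal{D}^0}}$ as concrete bundle maps constructed from the differential $dH_D$, so that the two claimed identities reduce to the standard (anti-)holomorphicity conditions $dH_D \circ J_X = \pm J_{M_\Gamma} \circ dH_D$.

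The first step is the canonical identification $TM_\Gamma \cong \mathcal{E}_K(\mathfrak{p})$ coming from the Cartan decomposition $\mathfrak{g} = \mathfrak{k} \oplus \mathfrak{p}$: the tangent space to $G/K$ at the basepoint is $\mathfrak{p}$ with $K$ acting by the isotropy representation, and this identification descends to $M_\Gamma$. Combined with Proposition \ref{prop1}(2) and Corollary \ref{cor2}, pulling back to $X$ identifies $E_K(\mathfrak{p})$ with $H^*_D TM_\Gamma$, so the target bundles in \eqref{f6} and \eqref{f7} match.

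The second step is the technical heart of the argument. I would verify that, under the identification above, the tautological $\mathfrak{p}$-valued form obtained by applying the construction of \eqref{h4} to $(\mathcal{E}_G, \mathcal{D}^0, \mathcal{E}_K)$ is simply the identity endomorphism of $TM_\Gamma$. This is a direct unwinding of the definitions of $\mathcal{E}_G$, of $\mathcal{D}^0$ (built from the Maurer--Cartan form), and of the section $\mathcal{N}$ in \eqref{M}; in essence, the $\mathfrak{p}$-component of the Maurer--Cartan form gives the canonical identification of the tangent space of $G/K$ with $\mathfrak{p}$. Once this is in hand, Proposition \ref{prop1} (which transports $\mathcal{D}^0$ to $D$ and $\mathcal{E}_K$ to $E_K$) forces the analogous form on $X$ obtained from $(D, E_K)$ to coincide with $dH_D \colon TX \to H^*_D TM_\Gamma \cong E_K(\mathfrak{p})$.

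The third step is a short type-decomposition calculation: starting from $\omega = \frac{1}{\sqrt{-1}}(D^{1,0} - D^{0,1})$, one checks that for any real $1$-form $\alpha$ with values in a real bundle, regarded as a bundle map $TY \to V$, the associated $\omega$ equals $\epsilon \cdot \alpha \circ J_Y$ for a sign $\epsilon \in \{\pm 1\}$ fixed by the $(1,0)/(0,1)$ convention. Applying this to the two situations of the previous step yields $\omega_{\mathcal{D}^0} = \epsilon \cdot J_{M_\Gamma}$ and $\omega_D = \epsilon \cdot dH_D \circ J_X$, whence by naturality of pullback $\widetilde{H^*_D \omega_{\mathcal{D}^0}} = \epsilon \cdot J_{M_\Gamma} \circ dH_D$. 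The sign $\epsilon$ cancels, and $\widetilde{H^*_D \omega_{\mathcal{D}^0}} = \pm \omega_D$ reduces to $J_{M_\Gamma} \circ dH_D = \pm dH_D \circ J_X$, i.e.\ the condition that $H_D$ is holomorphic or anti--holomorphic respectively. The main obstacle is the second step; all remaining work is formal manipulation of the identifications already supplied by Proposition \ref{prop1} and Corollary \ref{cor2}.
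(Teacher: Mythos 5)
Your three steps establish precisely the equivalence $\widetilde{H^*_D\omega_{{\mathcal D}^0}}\,=\,\pm\omega_D$ if and only if $dH_D\circ J_X\,=\,\pm J_{M_\Gamma}\circ dH_D$: the identification $TM_\Gamma\,\cong\,{\mathcal E}_K({\mathfrak p})$, the computation that ${\mathcal D}^{0,\mathfrak p}$ is the identity of $TM_\Gamma$ while $D^{\mathfrak p}$ is $dH_D$, and the observation that $\alpha\,\longmapsto\,\frac{1}{\sqrt{-1}}(\alpha^{1,0}-\alpha^{0,1})$ is precomposition with $J$ up to a universal sign that cancels. This is exactly the paper's proof of Theorem \ref{thm1}, and that part of your argument is sound. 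But it is only Theorem \ref{thm1}; Corollary \ref{cor3} asserts more, and the additional assertions are what your proposal never addresses.

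Two things are missing. First, the claim ``in that case $H_D$ gives a harmonic metric'': holomorphicity of $H_D$ does not by itself yield harmonicity. You need the fact that a holomorphic map between K\"ahler manifolds is harmonic (Eells--Lemaire, Lichnerowicz), and that the same holds for anti--holomorphic maps after replacing $J_X$ by $-J_X$. This is also exactly where the K\"ahler hypothesis on $X$ enters --- Theorem \ref{thm1} needs only a complex structure on $X$, so an argument for the corollary that never invokes the K\"ahler condition cannot be complete. Second, the claim that the map given by Corlette's harmonic metric is holomorphic: that map comes from a possibly different reduction $E'_K$, and to transfer holomorphicity from $H_D$ to it you must invoke the uniqueness discussion at the end of Section \ref{sec2}. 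Once $H_D$ is known to be harmonic, any other harmonic reduction corresponds to $h'\,=\,g\cdot h$ for some $g\,\in\,G$ commuting with $\rho(\pi_1(X,x_0))$; since $G$ acts on $G/K$ by biholomorphisms, $h'$ is holomorphic (respectively, anti--holomorphic) exactly when $h$ is. Without these two steps you have proved the ``conversely'' clauses but not the statements about harmonicity or about the Corlette map.
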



\end{document}